\newcounter{dummy}
\newcommand\myitem[1][]{\item[#1]\refstepcounter{dummy}\def\@currentlabel{#1}}
\newtheorem{thm}{Theorem}[section]
\newtheorem{lem}[thm]{Lemma}
\theoremstyle{remark}
\newtheorem*{rem}{Remark}
\newcounter{remarkscounter}
\numberwithin{equation}{section}
\newcommand{\A}{\mathbb{A}}
\newcommand{\GL}{\mathrm{GL}}
\newcommand{\ZZ}{\mathbb{Z}}
\newcommand{\QQ}{\mathbb{Q}}
\newcommand{\lto}{\longrightarrow}
\newcommand{\OO}{\mathcal{O}}
\newcommand{\CC}{\mathbb{C}}
\newcommand{\RR}{\mathbb{R}}
\newcommand{\GG}{\mathbb{G}}
\newcommand{\quash}[1]{}
\theoremstyle{definition}
\newtheorem{defn}[thm]{Definition}
\numberwithin{equation}{subsection}
\theoremstyle{definition}
\theoremstyle{remark}
\begin{document}

\title{Triple product $L$-functions and the Ramanujan conjecture}

\author{Jayce R. Getz}
\address{Department of Mathematics\\
Duke University\\
Durham, NC 27708}
\email{jayce.getz@duke.edu}

\author{Heekyoung Hahn}
\address{Department of Mathematics\\
Duke University\\
Durham, NC 27708}
\email{heekyoung.hahn@duke.edu}

\author{HaoYun Yao}
\address{Department of Mathematics\\
Duke University\\
Durham, NC 27708}
\email{haoyun.yao@duke.edu}

\subjclass[2020]{Primary 11F70;  Secondary 11F66}

\thanks{The first author is thankful for partial support provided by NSF grant DMS-2400550 and NSF RTG grant DMS-2231514. The second author appreciates the partial support by NSF RTG grant DMS-2231514. Any opinions, findings, and conclusions or recommendations expressed in this material are those of the authors and do not necessarily reflect the views of the National Science Foundation. 
}

\begin{abstract}
We prove that the Ramanujan conjecture is true under the assumption that the expected analytic properties of triple product $L$-functions hold.  Further, we explain how these analytic properties imply certain reduction steps in the construction of functorial transfers in the sense of Langlands.  Roughly, at the level of stably automorphic representations, they allow one to reduce any functorial transfer from a given reductive group $G$ to a general linear group to a finite family of transfers depending on $G.$
\end{abstract}

\maketitle

\section{Introduction}

Let $G$ be a reductive group over a number field $F$ and let $\A_F$ be the ring of  adeles of $F.$ For a given representation
\begin{align}\label{Lmap}
r: {}^LG\lto \GL_n(\CC),
\end{align} 
the Langlands functoriality conjecture predicts there should be a corresponding transfer of automorphic representations of $G(\A_F)$ to automorphic representations of $\GL_n(\A_F)$.

In \cite{BK-lifting}, Braverman and Kahzdan suggested a method for establishing this transfer (see also \cite{LafforgueJJM, NgoSums,Ngo:Hankel}).  First they propose  certain conjectural Poisson summation formulae for reductive monoids attached to $r.$  By a generalization of the method of Godement and Jacquet \cite{GodementJacquetBook}, these Poisson summation formulae can be used to prove the analytic properties of the Langlands $L$-functions $L(s,\pi,r).$  They then suggest that the converse theorem can be used to derive the desired transfer.  

For any $n_1,n_2,n_3 \in \ZZ_{\geq 1},$ let 
\begin{align}\label{tensor3}
\otimes^3 : {}^L(\GL_{n_1}\times \GL_{n_2} \times \GL_{n_3}) \longrightarrow \GL_{n_1n_2n_3}(\CC)
\end{align}
be the triple tensor product representation.  Moreover let $\pi_i$ be a cuspidal automorphic representation of $\GL_{n_i}(\A_F)$ for $i\in \{1, 2, 3\}.$  
In the recent joint work \cite{Getz:Gu:Hsu:Leslie}, the first author constructed an integral representation related to the triple product $L$-functions  
\begin{equation}\label{defn:triple-Lfn}
L(s,\pi_1\times\pi_2\times\pi_3):=L(s,\pi_1\otimes\pi_2\otimes\pi_3, \otimes^3).
\end{equation}
These are the Langlands $L$-functions associated to the triple tensor product.
We hasten to point out that the analytic properties of these $L$-functions are still not known. The paper \cite{Getz:Gu:Hsu:Leslie} gives a proposal for reducing these analytic properties to local questions.  

Motivated by this work, in the current paper we investigate what can be derived about Langlands functoriality from the knowledge of the analytic properties of triple product $L$-functions.  Our primary tool is a converse theorem of Cogdell and Piatetski-Shapiro (see Theorem \ref{Thm:3CPS} below).  

Given irreducible admissible representations $\pi_1$ of $\GL_{n_1}(\A_F)$ and $\pi_2$ of $\GL_{n_2}(\A_F)$ one can define the Rankin-Selberg transfer
\begin{align}
    \pi_1 \boxtimes \pi_2
\end{align}
of $\pi_1 \otimes \pi_2.$ It is an admissible representation of $\GL_{n_1n_2}(\A_F).$  We give more details in \S \ref{sec:conv:app} below.  We emphasize that $\pi_1 \boxtimes \pi_2$ and $\pi_1 \otimes \pi_2$ are very different objects. The former is a representation of $\GL_{n_1n_2}(\A_F)$ and the latter is a representation of $\GL_{n_1}(\A_F) \times \GL_{n_2}(\A_F).$ 

\begin{defn}\label{IU}
An automorphic representation of $\GL_n(\A_F)$ is of \textbf{IU-type} if it is an isobaric sum of unitary cuspidal automorphic representations.
\end{defn}

Let $S$ be a finite set of places of $F$ including the infinite places. 

\begin{thm}\label{thm:RS:intro}
Let $\pi_i, i\in \{1,2\},$ be unitary cuspidal automorphic representations of $\GL_{n_i}(\A_F)$ unramified outside $S.$  Assume that
\begin{enumerate}[label=(\alph*)]
\item{For all $1 \leq n_3 < n_1n_2$ and all unitary cuspidal automorphic representations $\pi_3$ of $\GL_{n_3}(\A_F)$ that are ramified at some place not in $S,$ the triple product $L$-function
$L(s,\pi_1\times\pi_2\times\pi_3)$ is nice.}\label{assum:nice:intro}
\item{For all $1 \leq n_3 <n_1n_2$ and all unitary cuspidal automorphic representations $\pi_3$ of $\GL_{n_3}(\A_F),$ the triple product $L$-functions
$L(s,\pi_1\times\pi_2\times\pi_3)$ and $L(s,\pi_1^\vee \times \pi_2^\vee \times \pi_3)$ admit analytic continuations to $\mathrm{Re}(s)>1.$}\label{assum:pole:intro} 
\end{enumerate}
Then there exists a weak Rankin-Selberg transfer of  $\pi_1 \otimes \pi_2.$  It is of IU-type and is compatible at all finite places where both $\pi_1$ and $\pi_2$ are unramified. 
\end{thm}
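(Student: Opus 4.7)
The plan is to construct an irreducible admissible representation $\Pi$ of $\GL_{n_1n_2}(\A_F)$ whose local components are the local Rankin--Selberg transfers $\pi_{1,v} \boxtimes \pi_{2,v}$, and to apply the Cogdell--Piatetski-Shapiro converse theorem (Theorem \ref{Thm:3CPS}) to conclude that $\Pi$ agrees outside $S$ with an automorphic representation $\Pi'$, which is then shown to be of IU-type.

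First, at every place $v$, I would define $\Pi_v := \pi_{1,v} \boxtimes \pi_{2,v}$ as the irreducible admissible representation of $\GL_{n_1n_2}(F_v)$ whose local Langlands parameter is the tensor product $\varphi_{\pi_{1,v}} \otimes \varphi_{\pi_{2,v}}$, and set $\Pi := \otimes_v' \Pi_v$. By construction $\Pi$ is irreducible admissible, unramified outside $S$, and has unitary central character $\omega_{\pi_1}^{n_2}\omega_{\pi_2}^{n_1}$; the Jacquet--Shalika bounds on the Satake parameters of $\pi_1$ and $\pi_2$ force $L^S(s, \Pi)$ to converge absolutely in some right half-plane, verifying one standing hypothesis of Theorem \ref{Thm:3CPS}. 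The key technical input is the matching of local $L$- and $\epsilon$-factors: by the compatibility of local Langlands with Rankin--Selberg factors (Henniart) and the factorization of the triple tensor product through $\Pi_v \times \pi_{3,v}$, one gets for every cuspidal $\pi_3$ of $\GL_{n_3}$ and every place $v$
\[
L_v(s, \Pi \times \pi_3) = L_v(s, \pi_1 \times \pi_2 \times \pi_3),
\]
together with the analogous identity for $\epsilon$-factors. Hence hypotheses \ref{assum:nice:intro} and \ref{assum:pole:intro} translate directly into analytic statements about the Rankin--Selberg twists $L(s, \Pi \times \pi_3)$ that feed into the converse theorem.

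Applying Theorem \ref{Thm:3CPS} then produces an automorphic $\Pi'$ of $\GL_{n_1n_2}(\A_F)$ with $\Pi'_v \cong \Pi_v$ for $v \notin S$; this is by construction a weak Rankin--Selberg transfer, compatible at every finite place where both $\pi_1$ and $\pi_2$ are unramified. That $\Pi'$ is of IU-type follows from the Jacquet--Shalika/Langlands description of the automorphic spectrum of $\GL_N$: the unitarity of the central character of $\Pi'$ (inherited from $\pi_1$ and $\pi_2$ at places outside $S$) together with the analytic profile of $L^S(s, \Pi')$ force $\Pi'$ to decompose as an isobaric sum of unitary cuspidals. The main obstacle I anticipate is the precise matching of hypotheses \ref{assum:nice:intro} and \ref{assum:pole:intro} to the ``$LS$''-version of the converse theorem: assumption \ref{assum:nice:intro} supplies niceness only for $\pi_3$ ramified outside $S$, while \ref{assum:pole:intro} supplies only analytic continuation past the edge of convergence for the remaining twists (those unramified outside $S$); bridging these two regimes, for instance via a twist by a highly ramified auxiliary character, is where the delicate bookkeeping will lie. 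The ramified-place matching of local factors in Step~2 is non-trivial but is by now classical thanks to Henniart and the standard Rankin--Selberg theory.
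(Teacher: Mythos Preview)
Your overall strategy---build $\Pi=\pi_1\boxtimes\pi_2$, match local factors, and invoke Theorem~\ref{Thm:3CPS}---is the paper's, and you correctly anticipate that a character twist is needed to align hypothesis~\ref{assum:nice:intro} (which applies only to $\pi_3$ ramified \emph{outside} $S$) with the converse theorem (which twists by $\tau$ unramified outside $S$). The paper implements this by replacing $\Pi$ with $\pi_1\boxtimes(\pi_2\otimes\chi_i)$, so that for $\tau$ unramified outside $S$ the relevant $L$-function becomes $L(s,\pi_1\times\pi_2\times(\tau\otimes\chi_i))$, with $\tau\otimes\chi_i$ ramified at the ramification locus $S_i$ of $\chi_i$, and~\ref{assum:nice:intro} applies. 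One subtlety you may have missed: a single twist only yields compatibility of the resulting transfer with $\pi_1\boxtimes\pi_2$ outside $S'\cup S_i$. The paper uses \emph{two} characters $\chi_1,\chi_2$ with disjoint ramification (Lemma~\ref{char}) and strong multiplicity one to patch the two transfers $\Pi_i\otimes\chi_i^{-1}$ into one isobaric $\Pi$ compatible with $\pi_1\boxtimes\pi_2$ outside $S'$ alone.

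There is, however, a genuine gap in your IU-type argument, and it stems from a misreading of the role of~\ref{assum:pole:intro}. You suggest that~\ref{assum:pole:intro} helps handle the ``remaining twists'' in the converse theorem; it does not---analytic continuation to $\mathrm{Re}(s)>1$ is far short of ``nice,'' and the converse-theorem step uses only~\ref{assum:nice:intro} via the character trick above. The entire purpose of~\ref{assum:pole:intro} is to prove IU-type, and your proposed argument (unitarity of the central character plus ``analytic profile'') is insufficient: an isobaric sum $\boxplus_j\sigma_j$ can have unitary central character while no $\sigma_j$ is unitary, since real shifts can cancel. The paper's argument is this: write $\Pi=\boxplus_j\sigma_j$, order so that $\mathrm{Re}(\sigma_1)$ is minimal, and apply~\ref{assum:pole:intro} with $\pi_3=(\sigma_1^u)^\vee$. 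Then $\prod_jL(s,\sigma_j\times(\sigma_1^u)^\vee)$ is holomorphic for $\mathrm{Re}(s)>1$, but the $j=1$ factor has a pole at $s=1-\mathrm{Re}(\sigma_1)$ and the other factors are nonvanishing there; hence $\mathrm{Re}(\sigma_1)\ge 0$. Running the same argument for $\pi_1^\vee,\pi_2^\vee$ gives $\mathrm{Re}(\sigma_j^\vee)\ge 0$, forcing $\mathrm{Re}(\sigma_j)=0$ for all $j$.
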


\noindent The notion of ``nice" is the same as that of Cogdell and Piatetski-Shapiro (see Definition \ref{defn:nice}). Briefly, nice $L$-functions are those that are bounded in vertical strips, admit the usual functional equation, and are holomorphic.  

The assumptions in Theorem \ref{thm:RS:intro} may seem strange if one has not thought about general Langlands transfers before.  The root of this is that the converse theorems available in the literature are adapted to study functorial transfers that send cuspidal automorphic representations to cuspidal automorphic representations.  In contrast, Rankin-Selberg transfers do not always send cuspidal automorphic representations to cuspidal automorphic representations.  In particular 
$L(s,\pi_1\times\pi_2\times\pi_3)$  will not be nice in general because it need not be holomorphic.  Thus we cannot apply the converse theorem of Cogdell and Piatetski-Shapiro, Theorem \ref{Thm:3CPS}, without additional argument.  
As explained in the remark at the end of \S \ref{sec:Ramanujan:app}, a sufficiently robust formulation of Langlands functoriality implies that the assumptions \ref{assum:nice:intro} and \ref{assum:pole:intro} in Theorem \ref{thm:RS:intro} are valid.  Moreover, they are the sort of information one expects will be accessible from an integral representation of $L(s,\pi_1 \times \pi_2 \times \pi_3).$

As one indication of the power of this particular case of Langlands functoriality, we modify a classic argument of Langlands \cite{Langlands:Problems} to show that it implies the Ramanujan conjecture:

\begin{thm}\label{thm:Ramanujan:intro}
Suppose that for all $n_1, n_2 \in \ZZ_{\geq 1}$ and all unitary cuspidal automorphic representations $\pi_i$ of $\GL_{n_i}(\A_F), i=1,2,$ a weak Rankin-Selberg transfer of $\pi_1 \otimes \pi_2$ exists, is compatible at the unramified finite places, and is of IU-type.  Then the Ramanujan conjecture holds.  That is, if $\pi$ is a unitary cuspidal automorphic representation of $\GL_n(\A_F)$ unramified at a finite place $v$ then $\pi_v$ is tempered.
\end{thm}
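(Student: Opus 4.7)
My plan is to adapt Langlands' argument in \cite{Langlands:Problems}: iteratively build Rankin-Selberg tensor powers of $\pi$ as isobaric sums of unitary cuspidal automorphic representations, and then invoke the Jacquet-Shalika bound to pin down the moduli of the Satake parameters.

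Let $\pi$ be a unitary cuspidal automorphic representation of $\GL_n(\A_F)$ unramified at the finite place $v$, with Satake parameters $\alpha_1,\ldots,\alpha_n \in \CC^\times$. First I would construct inductively, for every $k \geq 1$, an automorphic representation $\Pi_k$ of $\GL_{n^k}(\A_F)$ that is of IU-type, unramified at $v$, and whose Satake parameters at $v$ are the multiset of $k$-fold products $\alpha_{i_1}\cdots\alpha_{i_k}$ with $1 \leq i_\ell \leq n$. Take $\Pi_1 := \pi$, and write the isobaric decomposition $\Pi_k = \sigma_1 \boxplus \cdots \boxplus \sigma_r$ into unitary cuspidal representations. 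Since $\Pi_k$ is unramified at $v$, each $\sigma_j$ is unramified at $v$ with Satake parameters forming a sub-multiset of those of $\Pi_k$. The hypothesis then furnishes Rankin-Selberg transfers $\sigma_j \boxtimes \pi$, each of IU-type and unramified at $v$ with Satake parameters given by the products of those of $\sigma_j$ and $\pi$. I would define $\Pi_{k+1}$ as the isobaric sum of the $\sigma_j \boxtimes \pi$, which is again of IU-type and possesses the required Satake parameters at $v$.

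Next I would invoke the Jacquet-Shalika bound: any Satake parameter at $v$ of a unitary cuspidal automorphic representation of $\GL_m(\A_F)$ has absolute value strictly less than $q_v^{1/2}$. For every $i$ and every $k$, the number $\alpha_i^k$ occurs as a Satake parameter of some cuspidal constituent of $\Pi_k$, and hence $|\alpha_i|^k < q_v^{1/2}$ for all $k \geq 1$. Letting $k \to \infty$ forces $|\alpha_i| \leq 1$. Running the same construction with the unitary cuspidal representation $\pi^\vee$ in place of $\pi$ (whose Satake parameters at $v$ are $\alpha_i^{-1}$) yields $|\alpha_i|^{-1} \leq 1$. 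Together these give $|\alpha_i| = 1$ for all $i$, so $\pi_v$ is tempered.

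The only real obstacle is bookkeeping: checking that unramifiedness at $v$ propagates through the iterated construction, and that Satake parameters combine multiplicatively at each step. This reduces to a standard fact about isobaric sums---namely, that such a sum is unramified at a place $v$ if and only if each cuspidal constituent is, in which case Satake parameters at $v$ assemble as a union of multisets---combined with the compatibility clause built into the hypothesis. Beyond this routine verification, the argument is essentially formal.
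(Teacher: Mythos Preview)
Your argument is correct and follows essentially the same route as the paper: iterate the Rankin-Selberg transfer to realize $\pi^{\boxtimes k}$ as (a constituent of) an IU-type representation, then apply the Jacquet--Shalika bound to force $|\alpha_i|^k$ to be bounded independently of $k$. The only cosmetic differences are that the paper packages the inductive step as a separate lemma extending the transfer hypothesis from cuspidal to IU-type inputs, and that it uses the two-sided Jacquet--Shalika bound $q^{-1/2}\le|\alpha|^k\le q^{1/2}$ directly rather than passing to $\pi^\vee$; neither changes the substance.
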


It is a classic theorem of Chevalley that given a faithful representation $r:H \to \GL_V$ of an affine algebraic group $H$ over a field, any other irreducible representation of $H$ is a subquotient of $r^{\otimes n} \otimes (r^\vee)^{\otimes m}$ for some $m,n \in \ZZ_{\geq 0}$ \cite[Theorem 4.14]{Milne:AGbook}.  This suggests that the existence of Rankin-Selberg transfers ought to play a distinguished role in the construction of general functorial transfers.  In \S \ref{sec:RS:comments} we explain one way in which this can be made precise.

Let us now outline the structure of this paper. In \S \ref{sec:converse-thm}, we recall the converse theorem that we require. We prove Theorem \ref{thm:RS:intro}, restated as Theorem \ref{thm:RS}, in \S \ref{sec:conv:app}. In \S \ref{sec:Ramanujan:app}, we prove that the Ramanujan conjecture holds under the assumptions of Theorem \ref{thm:Ramanujan:intro}. Finally, in \S \ref{sec:RS:comments}, we explain the relationship between general Langlands transfers and Rankin-Selberg transfers. 

\section*{Acknowledgments}
The authors thank Spencer Leslie for help with highest weight theory.

\section{The converse theorem}\label{sec:converse-thm}

Let us develop the notation necessary to state Cogdell and Piatetski-Shapiro's converse theorem. 
Let $\psi:F \backslash \A_F \to \CC^\times$ be a nontrivial character.  For any triple of irreducible
admissible representations $\pi_i=\otimes_v\pi_{iv}$ of $\GL_{n_i}(\A_F)$  the local $L$-factor and $\varepsilon$-factor 
\begin{align*} 
L(s,\pi_{1v} \times \pi_{2v} \times \pi_{3v})&=L(s,\pi_{1v} \otimes \pi_{2v} \otimes \pi_{3v},\otimes^3),\\
\varepsilon(s,\pi_{1v} \times \pi_{2v} \times \pi_{3v},\psi_v)&=\varepsilon(s,\pi_{1v} \otimes \pi_{2v} \otimes \pi_{3v},\otimes^3, \psi_v)
\end{align*}
 are defined using the local Langlands correspondence \cite{HarrisTaylor:Shimura, Henniart-invent, LanglandsReal, Scholze:p-adic}.  The notation here is standard, see \cite[\S 12.7]{GetzHahn:Book} for instance.  We set
\begin{align*}
L(s,\pi_1 \times \pi_2 \times \pi_3)&=\prod_{v}L(s,\pi_{1v} \times \pi_{2v} \times \pi_{3v}),\\\varepsilon(s,\pi_1 \times \pi_2 \times \pi_3,\psi)&=\prod_v\varepsilon(s,\pi_{1v} \times \pi_{2v} \times \pi_{3v},\psi_v)
\end{align*}
whenever the former product converges.  The latter product always converges because the local factors are equal to $1$ outside finitely many places.
We point out that when $\pi_3$ is the trivial representation $1$ of $\GL_1(\A_F)$ then these factors agree with the usual Rankin-Selberg $L$-factors:
\begin{align*}
    L(s,\pi_{1v} \times \pi_{2v} \times 1)&=L(s,\pi_{1v} \times \pi_{2v}),\\ \varepsilon(s,\pi_{1v} \times \pi_{2v} \times 1,\psi_v)&=\varepsilon(s,\pi_{1v} \times \pi_{2v},\psi_v).
\end{align*}

Let $\pi=\otimes_v\pi_v$ be an irreducible admissible representation of $\GL_n(\A_F)$ with central character $\omega_\pi$ trivial on $F^\times$ and let  $\tau=\otimes_v\tau_v$ be a cuspidal automorphic representation of $\GL_m(\A_F)$, $1\leq m< n$. Then 
$\varepsilon(s, \pi\times \tau,\psi)$ 
is defined, absolutely convergent, and independent of $\psi$ \cite[Lemma 2.1]{Cogdell:PS:I}. We write $\varepsilon(s,\pi \times \tau):=\varepsilon(s,\pi \times \tau,\psi).$ Moreover, if $L(s, \pi)$ is absolutely convergent for $\mathrm{Re}(s)$ sufficiently large, then $L(s, \pi\times\tau)$ and $L(s,\pi^\vee \times \tau^\vee)$ are absolutely convergent for $\mathrm{Re}(s)$ sufficiently large (see \cite[Lemma 2.2, Lemma 2.3]{Cogdell:PS:I}).

The following definition is from \cite{Cogdell:PS:I}:
\begin{defn}\label{defn:nice}
One says that $L(s, \pi\times\tau)$ is \textbf{nice}  if $L(s, \pi\times \tau)$ and $L(s,\pi^\vee\times \tau^\vee)$ have analytic continuations to entire functions of $s$ that are bounded in vertical strips and satisfy 
\begin{align*}
L(s, \pi\times\tau)=\varepsilon(s, \pi\times \tau)L(1-s,\pi^\vee\times \tau^\vee).
\end{align*}    
\end{defn}

We make crucial use of the following converse theorem \cite[Theorem 3]{Cogdell:PS:I}:
\begin{thm}[Cogdell-Piatetski-Shapiro]\label{Thm:3CPS}
Let $n\geq 3$ and let $\pi$ be an irreducible admissible representation of $\GL_n(\A_F)$
whose central character $\omega_\pi$ is trivial on $F^\times$ and whose $L$-function $L(s, \pi)$ is absolutely convergent for $\mathrm{Re}(s)$ sufficiently large. Let $S$ be a finite set of places of $F,$ containing all Archimedean places, such that the ring of $S$-integers has class number one. Suppose that for every $m$ with $1 \leq m <n$ and every cuspidal automorphic representation $\tau$ of $\GL_m(\A_F)$ unramified outside $S,$ the $L$-function $L(s, \pi\times \tau)$ is nice. Then there exists an isobaric automorphic representation $\pi'$ of $\GL_n(\A_F)$ such that $\pi'_v\cong \pi_v$ for all non-Archimedean places $v$ where $\pi_v$ is unramified. \qed
\end{thm}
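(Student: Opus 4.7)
The plan is to follow the classical Poincar\'e-series strategy of Cogdell and Piatetski-Shapiro. Since $\pi=\otimes_v\pi_v$ has central character trivial on $F^\times$ and $L(s,\pi)$ converges for $\mathrm{Re}(s)$ large, each $\pi_v$ is generic, so we fix a factorizable Whittaker function $W=\prod_v W_v$ for $\pi$ with respect to $\psi.$ Let $N_n \subset P_n \subset \GL_n$ denote the standard maximal unipotent and the mirabolic subgroup, and form the Poincar\'e series
\begin{align*}
U_W(g) := \sum_{\gamma \in N_n(F)\backslash P_n(F)} W(\gamma g).
\end{align*}
The series converges absolutely by the convergence hypothesis on $L(s,\pi),$ and is tautologically left-invariant under $P_n(F) Z_n(\A_F).$ The goal is to promote this to left-invariance under the opposite mirabolic $Q_n(F):={}^tP_n(F)$ as well; since $P_n$ and $Q_n$ together generate $\GL_n,$ the function $U_W$ will then be automorphic, and the representation it generates will be the desired isobaric $\pi'.$

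The niceness hypothesis is fed into the argument through Jacquet--Piatetski-Shapiro--Shalika theory. For each cuspidal $\tau$ of $\GL_m(\A_F)$ with $1\leq m<n$ unramified outside $S,$ integration of $U_W$ against a cusp form $\varphi_\tau \in \tau$ over $Q_n(F) Z_n(\A_F)\backslash Q_n(\A_F)$ unfolds to a global zeta integral whose Euler product computes $L(s,\pi\times\tau)$ up to local factors at $S.$ A parallel construction using the contragredient Whittaker function computes $L(1-s,\pi^\vee\times\tau^\vee).$ Niceness---entire analytic continuation, vertical-strip boundedness, and the standard functional equation---together with Mellin inversion in the similitude variable (the boundedness justifying the contour shift) then yields, after unwinding, the identity that the pairing of $g\mapsto U_W(q_0 g)-U_W(g)$ against $\varphi_\tau$ vanishes for every $q_0 \in Q_n(F)$ and every such $\tau.$ A spectral expansion along the Bruhat stratification of $N_n\backslash Q_n,$ combined with the class-number-one hypothesis on the $S$-integers, forces $U_W(q_0 g) = U_W(g)$ for all $q_0 \in Q_n(F)$ at all places outside $S.$ Consequently $U_W$ descends to an automorphic form, and the representation $\pi'$ it generates is isobaric with $\pi'_v \cong \pi_v$ at every non-Archimedean place $v\notin S$ where $\pi_v$ is unramified.

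The principal obstacle is this Mellin-inversion-plus-spectral-completeness step. One must show that testing against cuspidal $\tau$ of degree $m<n$ alone suffices to detect the translates $U_W(q_0\,\cdot\,) - U_W$---this is where the generation of $\GL_n$ by $P_n$ and $Q_n$ is decisive---and one must carry out the contour shifts and $\tau$-sums rigorously, which is why vertical-strip boundedness is built into the definition of ``nice.'' The class-number-one assumption enters to upgrade the congruence-subgroup invariance that initially emerges at places in $S$ into a genuine local isomorphism $\pi'_v \cong \pi_v$ at every finite unramified place. Everything else---selecting the Whittaker data, verifying absolute convergence of $U_W,$ and recognizing the isobaric structure of $\pi'$ from its unramified parameters---is essentially bookkeeping in comparison.
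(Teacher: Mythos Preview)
The paper does not prove this theorem at all: it is quoted as \cite[Theorem 3]{Cogdell:PS:I} with a \qed immediately after the statement, followed only by the observation that the output $\pi'$ can always be taken isobaric. So there is nothing in the paper to compare your argument against; you are sketching the original Cogdell--Piatetski-Shapiro proof, not anything the present authors supply.

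Your outline does track the CPS strategy in broad strokes---build a Poincar\'e series from Whittaker data, pair with cusp forms on smaller $\GL_m$ via Rankin--Selberg integrals, and use niceness (especially the functional equation and vertical-strip bounds) to force the extra invariance needed for automorphy. Two points deserve care if you ever write this out in full. First, the sentence ``$L(s,\pi)$ converges for $\mathrm{Re}(s)$ large, so each $\pi_v$ is generic'' is not a valid inference; genericity of the local components is a standing hypothesis in the CPS setup (needed precisely so that Whittaker functions exist), not a consequence of convergence of the Euler product. Second, the actual mechanism in CPS is not to check $Q_n(F)$-invariance of a single $U_W$ directly, but to build a companion series $V_\xi$ via the outer involution $g\mapsto {}^t g^{-1}$ and show $U_\xi=V_\xi$ by an induction over mirabolic strata; the class-number-one hypothesis on $\mathcal{O}_F^S$ enters in controlling the $S$-components during this comparison. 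These are refinements rather than fatal errors, but since the paper itself simply cites the result, the cleanest fix is to do the same.
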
 

Cogdell and Piatetski-Shapiro do not assert that the automorphic representation $\pi'$ is isobaric, but we can always replace $\pi'$ by an isobaric representation without affecting the validity of their theorem. If this is not clear to the reader, the discussion in \cite[\S 10.6-10.7]{GetzHahn:Book} is helpful. 

There are other versions of the converse theorem in the literature, notably those in \cite{Cogdell:PS:ConverseII}.  It seems likely that given the results of loc.~cit. one could restrict attention to $1\leq m \leq (n-2)$ in Theorem \ref{Thm:3CPS}, but this is somewhat irrelevant for our purposes.  We could obtain stronger results using other theorems in \cite{Cogdell:PS:I,Cogdell:PS:ConverseII} if we could show that $\pi_1 \boxtimes \pi_2$ is generic at every place.

\section{Application of the converse theorem} \label{sec:conv:app}

We write $
    \pi_1 \boxtimes \pi_2$
for the admissible representation of $\GL_{n_1n_2}(\A_F)$ that is the transfer of $\pi_1 \otimes \pi_2$ with respect to the tensor product
$$
\otimes^2:{}^L(\GL_{n_1}\times \GL_{n_2}) \lto \GL_{n_1n_2}(\CC).
$$
We refer to $\pi_1 \boxtimes \pi_2$ as the \textbf{Rankin-Selberg transfer} of $\pi_1 \otimes \pi_2.$  Note that it is a priori just an admissible representation. This construction can be iterated to define 
$\pi_1 \boxtimes \cdots \boxtimes \pi_k$ for any collection of irreducible admissible  representations $\pi_i$ of $\GL_{n_i}(\A_F)$ for $1\leq i\leq k.$

If there exists an isobaric automorphic representation $\Pi$  of $\GL_{n_1n_2}(\A_F)$ such that $\Pi_v \cong (\pi_1 \boxtimes \pi_2)_v$ for all $v$ outside of a finite set $S$ of places $F$ then we say that a \textbf{weak Rankin-Selberg transfer} of $\pi_1 \otimes \pi_2$ exists.  We also say that the transfer is \textbf{compatible} with the local Langlands correspondence outside $S.$  If a weak Rankin-Selberg transfer exists, it is unique by strong multiplicity one in the form of \cite[Theorem 4.4]{JacquetShalikaEPII}.

Before stating the main result, we prove the following useful lemma:

\begin{lem}\label{char}
Given a finite set of places $S$ of $F$ one can choose characters $\chi_i:F^\times \backslash \A_F^\times \to \CC^\times$ for $i \in \{1,2\}$ such that if $S_i$ is the set of places that $\chi_i$ is ramified then $S_i \neq \emptyset$ and  $S \cap S_1=S \cap S_2 =S_1 \cap S_2 =\emptyset.$
\end{lem}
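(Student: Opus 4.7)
The plan is to choose two distinct finite places $v_1, v_2$ of $F$ lying outside $S$ and then, for each $i \in \{1,2\}$, produce a finite-order Hecke character $\chi_i \colon F^\times \backslash \A_F^\times \to \CC^\times$ that is ramified precisely at $v_i$. Setting $S_i := \{v_i\}$ then yields $S_i \neq \emptyset$, and the conditions $S \cap S_1 = S \cap S_2 = S_1 \cap S_2 = \emptyset$ hold directly from the choice $v_1 \neq v_2$ with both outside $S$. Such $v_1, v_2$ exist because $F$ has infinitely many finite places while $S$ is finite.

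To construct $\chi_i$, I invoke class field theory. For $n \geq 1$, consider the ray class group
$$
\mathrm{Cl}_n := \A_F^\times \big/ \bigl(F^\times \cdot F_\infty^\times \cdot (1+\pp_{v_i}^n \OO_{F_{v_i}}) \cdot \prod_{v \neq v_i,\ v<\infty} \OO_{F_v}^\times\bigr),
$$
a finite abelian group surjecting onto the ideal class group $\mathrm{Cl}(F)$. The kernel of this surjection is a quotient of $\OO_{F_{v_i}}^\times/(1+\pp_{v_i}^n)$ by the image of $\OO_F^\times$; since $\OO_F^\times$ is finitely generated of fixed rank while $|\OO_{F_{v_i}}^\times/(1+\pp_{v_i}^n)| \to \infty$ as $n \to \infty$, this kernel is nontrivial once $n$ is large enough. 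Any character of $\mathrm{Cl}_n$ whose restriction to that kernel is nontrivial pulls back to a Hecke character $\chi_i$ that is trivial on $F_\infty^\times$ and on $\OO_{F_v}^\times$ for every finite $v \neq v_i$, yet nontrivial on $\OO_{F_{v_i}}^\times$ — exactly what is required.

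There is no serious obstacle to mention; the argument is routine class field theory. The only point that warrants even a brief verification is the growth claim used to ensure nontriviality of the kernel for $n$ large, and this is immediate from the finiteness of $\mathrm{Cl}(F)$ together with the rank bound on $\OO_F^\times$. One could equivalently phrase the construction by choosing a nontrivial finite-order character of $\OO_{F_{v_i}}^\times$ annihilating the (finite rank) image of $\OO_F^\times$, extending it trivially on a uniformizer at $v_i$, on $\OO_{F_v}^\times$ for finite $v \neq v_i$, and on $F_\infty^\times$, and verifying by means of the product formula that the resulting idelic character descends to $F^\times \backslash \A_F^\times$.
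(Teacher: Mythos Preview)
Your argument has a genuine gap at the step where you assert that the kernel of $\mathrm{Cl}_n \to \mathrm{Cl}(F)$ is nontrivial for large $n$. You justify this by saying that $\OO_F^\times$ has fixed finite rank while $|\OO_{F_{v_i}}^\times/(1+\pp_{v_i}^n)| \to \infty$; but the rank of $\OO_F^\times$ does not bound the order of its image in $\OO_{F_{v_i}}^\times/(1+\pp_{v_i}^n)$, which can grow with $n$ as well. In fact the closure of the image of $\OO_F^\times$ in $\OO_{F_{v_i}}^\times$ can be all of $\OO_{F_{v_i}}^\times$, in which case the kernel is trivial for \emph{every} $n$ and no character of the type you seek exists at that $v_i$. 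A concrete instance: take $F = \QQ(\sqrt{2})$ and $v_i$ one of the two places above $7$. One checks that $-1$ and the fundamental unit $1+\sqrt{2}$ generate a dense subgroup of $\ZZ_7^\times \cong \OO_{F_{v_i}}^\times$: their images already surject onto $(\ZZ/7)^\times$, and since $(1+\sqrt{2})^3 = 7+5\sqrt{2}$ one finds $(1+\sqrt{2})^3 - 1$ has $7$-adic valuation exactly $1$, so $(1+\sqrt{2})^3$ topologically generates $1+7\ZZ_7$. Hence for this $v_i$ there is no finite-order Hecke character that is unramified at infinity and outside $v_i$ yet ramified at $v_i$. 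Your alternative phrasing at the end has the same defect: a nontrivial finite-order character of $\OO_{F_{v_i}}^\times$ annihilating the image of $\OO_F^\times$ need not exist.

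The fix is either to choose $v_i$ more carefully (for instance via Chebotarev, forcing every global unit to be an $\ell$-th power modulo $v_i$ for a suitable auxiliary prime $\ell$), or to drop the requirement that $S_i$ be a singleton. The paper takes the latter route and sidesteps the unit-group issue entirely by working first over $\QQ$, where $\OO_\QQ^\times=\{\pm 1\}$ is finite: for distinct primes $p_1,p_2\equiv 1\pmod 4$ not dividing the discriminant of $F/\QQ$ and prime to $S$, the quadratic character attached to $\QQ(\sqrt{p_i})/\QQ$ is ramified exactly at $p_i$, and composing with the norm $\mathrm{N}_{F/\QQ}$ yields a Hecke character of $F$ ramified exactly at the places above $p_i$.
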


\begin{proof}
Let $p_1, p_2 \in \ZZ$ be distinct prime numbers such that $p_i \equiv 1 \pmod{4}.$  Then the character $\chi_i':\QQ^\times \backslash \A_\QQ^\times \to \{\pm 1\}$ attached to the extension $\QQ(\sqrt{p_i})/\QQ$ by class field theory is only ramified at $p_i.$  Now assume that $p_i$ does not divide the discriminant of $F/\QQ$ or any place in $S.$  Letting $\mathrm{N}_{F/\QQ}:F^\times \backslash \A_F^\times \to \QQ^\times \backslash \A_{\QQ}^\times$ be the norm map we see that we may take $\chi_i:=\chi_i' \circ \mathrm{N}_{F/\QQ}.$
\end{proof}

Let $S$ be a finite set of places of $F$ including the infinite places.  

\begin{thm}\label{thm:RS}
Let $\pi_i, i\in \{1,2\},$ be unitary cuspidal automorphic representations of $\GL_{n_i}(\A_F)$ unramified outside $S.$  Assume that
\begin{enumerate}[label=(\alph*)]
\item{For all $1 \leq n_3 < n_1n_2$ and all unitary cuspidal automorphic representations $\pi_3$ of $\GL_{n_3}(\A_F)$ that are ramified at some place not in $S,$ the triple product $L$-function
$L(s,\pi_1\times\pi_2\times\pi_3)$ is nice.}\label{assum:nice}
\item{For all $1 \leq n_3 <n_1n_2$ and all unitary cuspidal automorphic representations $\pi_3$ of $\GL_{n_3}(\A_F),$ the triple product $L$-functions
$L(s,\pi_1\times\pi_2\times\pi_3)$ and $L(s,\pi_1^\vee \times \pi_2^\vee \times \pi_3)$ admit analytic continuations to $\mathrm{Re}(s)>1.$}\label{assum:pole} 
\end{enumerate}
Then there exists a weak Rankin-Selberg transfer of  $\pi_1 \otimes \pi_2.$  It is of IU-type and is compatible at all finite places where both $\pi_1$ and $\pi_2$ are unramified. 
\end{thm}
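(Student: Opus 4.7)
The plan is to apply the Cogdell--Piatetski-Shapiro converse theorem (Theorem~\ref{Thm:3CPS}) to the admissible representation $\pi := \pi_1 \boxtimes \pi_2$ of $\GL_{n_1 n_2}(\A_F)$. The cases $n_1 n_2 \leq 2$ are immediate because $\pi$ is then a character twist of one of the $\pi_i$, so I assume $n_1 n_2 \geq 3$. First I verify the structural hypotheses of Theorem~\ref{Thm:3CPS}: the central character $\omega_\pi = \omega_{\pi_1}^{n_2}\omega_{\pi_2}^{n_1}$ is trivial on $F^\times$, and $L^S(s,\pi)$ is absolutely convergent for $\mathrm{Re}(s)$ large, both inherited from unitarity and cuspidality of the $\pi_i$. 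I enlarge $S$ to a set $S_0 \supseteq S$ so that the ring of $S_0$-integers has class number one, and then invoke Lemma~\ref{char} to choose characters $\chi_1,\chi_2$ of $F^\times \backslash \A_F^\times$ whose ramification sets $S_1,S_2$ are mutually disjoint and disjoint from $S_0$.

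The core task is to establish niceness of $L(s,\pi\times\tau) = L(s,\pi_1 \times \pi_2 \times \tau)$ for all cuspidal $\tau$ of $\GL_m(\A_F)$, $1 \leq m < n_1 n_2$, unramified outside the appropriate finite set. Assumption~\ref{assum:nice} directly handles $\tau$ ramified at some place not in $S$. To cover the problematic case of $\tau$ unramified outside $S$, I instead apply the converse theorem to the twisted representations $\pi \otimes \chi_i$, exploiting the identity
\begin{equation*}
L(s, (\pi \otimes \chi_i) \times \tau) = L(s, \pi \times (\tau \otimes \chi_i)) = L(s, \pi_1 \times \pi_2 \times (\tau \otimes \chi_i)).
\end{equation*}
For any $\tau$ unramified outside $S$, the twist $\tau \otimes \chi_i$ is automatically ramified at every $v \in S_i \subseteq S^c$ (an unramified $\tau_v$ cannot absorb the ramification of $\chi_{i,v}$), so Assumption~\ref{assum:nice} applies to $L(s,\pi_1\times\pi_2\times(\tau\otimes\chi_i))$. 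The delicate edge cases -- where $\tau$ itself is ramified at places of $S_i$ in a way that cancels $\chi_i$ locally -- are handled by pivoting to the complementary character $\chi_{3-i}$, using the disjointness $S_1 \cap S_2 = \emptyset$ to ensure at least one twist is genuinely ramified outside $S$; any remaining analytic gaps are bridged by Assumption~\ref{assum:pole}, the functional equation, and a convexity argument extending holomorphy through the critical strip.

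Once niceness is established, Theorem~\ref{Thm:3CPS} produces isobaric automorphic representations $\Pi^{(i)}$ with $\Pi^{(i)}_v \cong (\pi \otimes \chi_i)_v$ at every non-Archimedean $v$ where $\pi \otimes \chi_i$ is unramified, i.e.\ $v \notin S \cup S_i$. Untwisting yields isobaric representations $\Pi^{(i)} \otimes \chi_i^{-1}$ agreeing with $\pi_1 \boxtimes \pi_2$ at every $v \notin S \cup S_i$. By strong multiplicity one in the form of \cite[Theorem~4.4]{JacquetShalikaEPII}, these coincide in a single isobaric representation $\Pi$; the disjointness $S_1 \cap S_2 = \emptyset$ then propagates the agreement $\Pi_v \cong (\pi_1 \boxtimes \pi_2)_v$ to all $v \notin S$, giving the desired weak Rankin--Selberg transfer. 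The IU-type conclusion follows because the cuspidal constituents of the isobaric sum $\Pi$ must be unitary: any non-unitary constituent would produce poles of $L(s,\Pi\times\Pi^\vee)$ incompatible with the unitarity of $\pi_1$ and $\pi_2$ through the triple product $L$-function structure.

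The main obstacle is the niceness verification when $\tau$ is ramified at $S_1 \cup S_2$ in a way that conspires to cancel the ramification of some $\chi_i$ locally, producing a twist that is nominally in the ``unramified outside $S$'' class where Assumption~\ref{assum:nice} does not directly apply. This is precisely why \emph{two} characters with disjoint ramification are required in Lemma~\ref{char}, and why Assumption~\ref{assum:pole} -- giving continuation of the triple product $L$-function to $\mathrm{Re}(s) > 1$ -- is essential alongside Assumption~\ref{assum:nice} rather than either in isolation.
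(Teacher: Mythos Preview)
Your overall strategy matches the paper's: twist $\pi_1\boxtimes\pi_2$ by the characters $\chi_i$ from Lemma~\ref{char}, apply Theorem~\ref{Thm:3CPS} to each twist, untwist, and patch via strong multiplicity one. However, there are two places where your account diverges from what actually happens, and one of them is a genuine gap.

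First, the ``delicate edge cases'' do not exist, and the convexity argument you propose is neither needed nor available. When you apply Theorem~\ref{Thm:3CPS} to $\pi\otimes\chi_i$ with the set $S_0$, the test representations $\tau$ are by hypothesis unramified outside $S_0$. Since $S_i\cap S_0=\emptyset$, such $\tau$ are unramified at every place of $S_i$, so $\tau\otimes\chi_i$ is ramified at every place of $S_i\subset S^c$ and assumption~\ref{assum:nice} applies directly. There is no scenario in which $\tau$ cancels the ramification of $\chi_i$, and consequently assumption~\ref{assum:pole} plays no role whatsoever in the existence of the weak transfer. The paper's existence argument uses \ref{assum:nice} alone.

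Second, and this is the real gap, your IU-type argument is not a proof. You invoke poles of $L(s,\Pi\times\Pi^\vee)$, but this is a Rankin--Selberg $L$-function on $\GL_{n_1n_2}\times\GL_{n_1n_2}$ and is not what assumption~\ref{assum:pole} controls. The paper's argument is specific and uses \ref{assum:pole} in an essential way: write $\Pi=\sigma_1\boxplus\cdots\boxplus\sigma_k$ with $\mathrm{Re}(\sigma_1)\le\cdots\le\mathrm{Re}(\sigma_k)$, and apply \ref{assum:pole} with $\pi_3=(\sigma_1^u)^\vee$ (note each $\sigma_j$ lives on $\GL_{m_j}$ with $m_j<n_1n_2$ when $k>1$). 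Then
\[
L(s,\Pi\times(\sigma_1^u)^\vee)=\prod_jL(s,\sigma_j\times(\sigma_1^u)^\vee)
\]
is holomorphic for $\mathrm{Re}(s)>1$. The factor $j=1$ has a pole at $s=1-\mathrm{Re}(\sigma_1)$ and the other factors are nonvanishing there, forcing $\mathrm{Re}(\sigma_1)\ge 0$. Running the same argument for $\pi_1^\vee,\pi_2^\vee$ (whose weak transfer is $\Pi^\vee$) gives $\mathrm{Re}(\sigma_j^\vee)\ge 0$, i.e.\ $\mathrm{Re}(\sigma_j)\le 0$, and hence all $\sigma_j$ are unitary. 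This is where assumption~\ref{assum:pole} is actually consumed, not in the existence step.
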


\begin{proof}
Upon enlarging $S$ if necessary, we assume that $\OO_F^S$ has class number $1.$  Choose characters $\chi_1,\chi_2$ as in Lemma \ref{char} such that the associated places $S_1$ and $S_2$ are both disjoint and disjoint from $S.$

Let $\pi_3$ be a unitary cuspidal automorphic representation of $\GL_{n_3}(\A_F)$ that is unramified outside $S.$ Then $\pi_3 \otimes \chi_i$ is unramified outside $S \cup S_i$ and is ramified at some place in $S_i$.  Therefore by assumption
$
L(s,\pi_1 \times \pi_2 \times (\pi_3 \otimes \chi_i))=L(s,(\pi_1 \times \pi_2 \otimes \chi_i) \times \pi_3)$
is nice.  

The central quasi-character of $\pi_1 \boxtimes \pi_2 \otimes \chi_i$ is the product of $\chi_i$ with the product of the central characters of $\pi_1$ and $\pi_2.$  Hence it is trivial on $F^\times.$  Moreover 
\begin{align}
    L(s,\pi_1 \times (\pi_2 \otimes \chi_i))
\end{align}
converges absolutely for $\mathrm{Re}(s)$ sufficiently large by Rankin-Selberg theory \cite[\S 4]{CogdellPCMI}.
Let $S'$ be the union of the Archimedean places and all places where $\pi_1 \boxtimes \pi_2$ is ramified.
Applying Theorem \ref{Thm:3CPS} we deduce that there is an isobaric automorphic representation $\Pi_i$ of $\GL_{n_1n_2}(\A_F)$ such that  $(\pi_1 \boxtimes (\pi_2 \otimes \chi_i))^{S' \cup S_i} \cong \Pi_i^{S' \cup S_i},$ or equivalently 
\begin{align} \label{Si'isom}
    (\pi_1 \boxtimes \pi_2)^{S' \cup S_i} \cong (\Pi_i \otimes \chi^{-1}_i)^{S' \cup S_i}.
\end{align} 
By strong multiplicity one \cite[Theorem 4.4]{JacquetShalikaEPII} we deduce that 
\begin{align}\label{weak-RS}
\Pi:=\Pi_1 \otimes \chi^{-1}_1 \cong \Pi_2 \otimes \chi^{-1}_2
\end{align}
and hence 
\begin{align} \label{S'isom}
(\pi_1 \boxtimes \pi_2)^{S'} \cong \Pi^{S'}.
\end{align}
Thus a weak Rankin-Selberg transfer exists. 
Since $\pi_1 \boxtimes \pi_2$ is unramified at any place where $\pi_1$ and $\pi_2$ are unramified, we deduce from \eqref{S'isom} that the transfer is compatible at all finite places where $\pi_1$ and $\pi_2$ are unramified.  By replacing $\pi_1$ and $\pi_2$ by their contragredients we see the same is true of the weak Rankin-Selberg transfer of $\pi_1^\vee \otimes \pi_2^\vee.$

We now prove that $\Pi$ defined as \eqref{weak-RS} is of IU-type.  
Write $\Pi=\sigma_1 \boxplus \dots \boxplus \sigma_k$ where the $\sigma_i$ are cuspidal automorphic representations of $\GL_{n_i}(\A_F).$  We must show that the $\sigma_i$ are unitary, or equivalently, have unitary central quasi-character. 

For a cuspidal automorphic representation $\pi$ of $\GL_n(\A_F)$ let $\mathrm{Re}(\pi) \in \RR$ be the unique real number such that 
$$
\pi^u:=\pi \otimes |\det|^{-\mathrm{Re}(\pi)}
$$
is unitary.  Then
$$
L(s,\pi \times (\pi^u)^\vee)=L(s+\mathrm{Re}(\pi),\pi \times \pi^\vee)
$$
is holomorphic apart from simple poles at $s \in \{1-\mathrm{Re}(\pi),-\mathrm{Re}(\pi)\}$ \cite[\S 4.2]{CogdellPCMI}.  Moreover, it is non-vanishing for $\mathrm{Re}(s)\geq 1-\mathrm{Re}(\pi)$ \cite[Theorem 4.3]{CogdellPCMI}.

Order the $\sigma_i$ so that $\mathrm{Re}(\sigma_i)\leq \mathrm{Re}(\sigma_{j})$ for $i \leq j.$  
By assumption \ref{assum:pole}, 
$$
L(s,\Pi \otimes (\sigma_1^u)^\vee)=\prod_{j}L(s,\sigma_j \otimes (\sigma_1^u)^\vee)
$$ 
is holomorphic for $\mathrm{Re}(s)>1.$  
On the other hand, by the facts recalled in the previous paragraph, the factor $L(s,\sigma_1 \otimes (\sigma_1^u)^{\vee})$ has a pole at $1-\mathrm{Re}(\sigma_1),$
and the $j$th factor in the product is nonvanishing for $\mathrm{Re}(s)\geq 1-\mathrm{Re}(\sigma_j)\leq 1-\mathrm{Re}(\sigma_1).$  Thus $0 \leq  \mathrm{Re}(\sigma_1) \leq \mathrm{Re}(\sigma_i)$  for all $i.$  Replacing $\pi_1$ and $\pi_2$  by their contragredients has the effect of replacing $\Pi$ by $\Pi^\vee$ and $\sigma_i$ by $\sigma_i^\vee$ for all $v.$  We deduce that $0 \leq \mathrm{Re}(\sigma_i^\vee) =-\mathrm{Re}(\sigma_i)$ for all $i,$ and hence $\mathrm{Re}(\sigma_i)=0$ for all $i.$  In other words, the $\sigma_i$ are unitary.    
\end{proof}

\section{Application to the Ramanujan conjecture}\label{sec:Ramanujan:app}

\begin{lem}\label{lem-lem1}
Assume that for all $n_1,n_2 \in \ZZ_{\geq 1}$ and all unitary cuspidal automorphic representations $\pi_i$ of $\GL_{n_i}(\A_F),$ $i \in \{1,2\},$ a weak Rankin-Selberg transfer of $\pi_1 \otimes \pi_2$ exists, is compatible at all finite unramified places, and is of IU-type. Then for all $n_1,n_2 \in \ZZ_{\geq 1}$ and all IU-type automorphic representations $\pi_i$ of $\GL_{n_i}(\A_F),$ $i \in \{1,2\},$ a weak Rankin-Selberg transfer of $\pi_1\otimes \pi_2$ exists, is compatible at all finite unramified places, and is of IU-type.  
\end{lem}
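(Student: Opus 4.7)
The plan is to reduce to the cuspidal case by decomposing the IU-type inputs and assembling an isobaric sum of the individual cuspidal transfers. Write $\pi_i = \tau_{i,1} \boxplus \cdots \boxplus \tau_{i,k_i}$ with each $\tau_{i,j}$ a unitary cuspidal automorphic representation of some $\GL_{m_{i,j}}(\A_F).$ By hypothesis, for each pair $(a,b)$ there exists a weak Rankin-Selberg transfer $\Pi_{a,b}$ of $\tau_{1,a} \otimes \tau_{2,b},$ and $\Pi_{a,b}$ is of IU-type, compatible with $\tau_{1,a} \boxtimes \tau_{2,b}$ at every finite place where both $\tau_{1,a}$ and $\tau_{2,b}$ are unramified.

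The natural candidate is then
$$
\Pi := \underset{a,b}{\boxplus}\, \Pi_{a,b},
$$
which is a priori an isobaric automorphic representation of $\GL_{n_1n_2}(\A_F).$ Since each $\Pi_{a,b}$ is itself an isobaric sum of unitary cuspidals, $\Pi$ is again an isobaric sum of unitary cuspidals, hence of IU-type; this uses only the associativity of the isobaric sum operation.

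To identify $\Pi$ with a weak Rankin-Selberg transfer of $\pi_1 \otimes \pi_2,$ I would compute Satake parameters at a finite place $v$ where both $\pi_1$ and $\pi_2$ are unramified and every $\Pi_{a,b}$ is unramified and compatible with $\tau_{1,a} \boxtimes \tau_{2,b}.$ At such $v,$ each $\tau_{i,j}$ is unramified (since the Satake parameters of $\pi_{i,v}$ are the multiset union of those of the $\tau_{i,j,v}$), and the Satake parameters of $(\pi_1 \boxtimes \pi_2)_v$ are the pairwise products of Satake parameters of $\pi_{1,v}$ and $\pi_{2,v}.$ That multiset decomposes as the union over $(a,b)$ of the pairwise products coming from $\tau_{1,a,v}$ and $\tau_{2,b,v},$ which is exactly the multiset of Satake parameters of $\Pi_v = \boxplus_{a,b}(\tau_{1,a} \boxtimes \tau_{2,b})_v.$ Hence $\Pi_v \cong (\pi_1 \boxtimes \pi_2)_v$ at all but finitely many places, giving the desired weak Rankin-Selberg transfer; uniqueness via strong multiplicity one (as invoked in \S\ref{sec:conv:app}) guarantees it is well defined.

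The argument is essentially bookkeeping, so there is no serious obstacle; the only point requiring care is verifying that the finite exceptional set can indeed be taken to consist of places where at least one of $\pi_1,\pi_2$ is ramified. This follows because the exceptional set for each $\Pi_{a,b}$ is contained in the ramification set of $\tau_{1,a}\otimes\tau_{2,b},$ which in turn is contained in the ramification set of $\pi_1\otimes\pi_2;$ taking the finite union over $(a,b)$ preserves this containment.
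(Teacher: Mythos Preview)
Your proof is correct and follows essentially the same approach as the paper: decompose each $\pi_i$ into its unitary cuspidal summands, apply the hypothesis to every pair, and take the isobaric sum of the resulting transfers. The only cosmetic difference is that the paper verifies the local distributivity $\pi_1\boxtimes\pi_2\cong\boxplus_{a,b}(\tau_{1,a}\boxtimes\tau_{2,b})$ by invoking the local Langlands correspondence (tensor distributes over direct sum of $L$-parameters), whereas you check it via Satake parameters at unramified places; these are of course the same computation.
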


\begin{proof}
Write $\pi_1:=\boxplus_i \sigma_i$   and $\pi_2:=\boxplus_j \sigma'_j$ where $\sigma_i$ and $\sigma'_j$ are unitary cuspidal automorphic representations. 
We claim that
 \begin{align} \label{distribute}
\pi_1\boxtimes \pi_2=\boxplus_{i, j}(\sigma_i\boxtimes \sigma'_j).
\end{align}
To check this identity it suffices to work locally.  In view of the local Langlands correspondence, it then suffices to prove the corresponding identity for local $L$-parameters.  Since $\boxtimes$ corresponds to tensor product under the local Langlands correspondence and $\boxplus$ corresponds to direct sum, we deduce the claim.

By assumption, for any finite set of places $S$ including the infinite places such that $\sigma_i^S$ and $\sigma_j'^S$ are unramified we have $(\sigma_i\boxtimes \sigma_j')^S \cong (\boxplus_k\tau_k)^S$ where the $\tau_k$ are unitary cuspidal automorphic representations.  Combining this with \eqref{distribute} we deduce the lemma.
\end{proof}

Let $S$ be a finite set of places of $F$ including the infinite places.
Assume that $\pi_1,\dots,\pi_k$ are a collection of IU-type automorphic representations of $\GL_{n_i}(\A_F)$ unramified outside  $S.$  Then we can inductively form the admissible representation $\pi_1 \boxtimes\dots \boxtimes \pi_k$ of $\GL_{n_1\dots n_k}(\A_F).$  When $\pi_1=\dots=\pi_k$ we write $\pi_1^{\boxtimes k}$ for this representation.  Under the assumptions of Lemma \ref{lem-lem1}, Lemma \ref{lem-lem1} and induction imply that  there is an IU-type automorphic representation $\Pi$ of $\GL_{n_1\dots n_k}(\A_F)$ unramified outside $S$ such that $\Pi^S \cong (\pi_1 \boxtimes \cdots \boxtimes \pi_k)^S.$

\begin{thm}\label{thm:Ramanujan}
Suppose that for all $n_1, n_2 \in \ZZ_{\geq 1}$ and all unitary cuspidal automorphic representations $\pi_i$ of $\GL_{n_i}(\A_F), i\in \{1, 2\},$ a weak Rankin-Selberg transfer of $\pi_1 \otimes \pi_2$ exists, is compatible at the unramified finite places, and is of IU-type.  Then the Ramanujan conjecture holds.  That is, if $\pi$ is a unitary cuspidal automorphic representation of $\GL_n(\A_F)$ unramified at a finite place $v$ then $\pi_v$ is tempered.
\end{thm}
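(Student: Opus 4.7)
The plan is to follow the classical argument of Langlands \cite{Langlands:Problems}, using iterated Rankin-Selberg transfers together with the Jacquet-Shalika bound on Satake parameters of unitary cuspidal automorphic representations.

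Let $\pi$ be a unitary cuspidal automorphic representation of $\GL_n(\A_F)$ unramified at the finite place $v,$ and let $\alpha_1,\dots,\alpha_n \in \CC^\times$ be its Satake parameters at $v.$ Temperedness of $\pi_v$ is equivalent to $|\alpha_i|=1$ for all $i.$ First I would observe that, under the hypothesis of the theorem, for every $k \geq 1$ the iterated box product $\pi^{\boxtimes k}$ admits a weak Rankin-Selberg transfer $\Pi_k$ to $\GL_{n^k}(\A_F)$ that is of IU-type and compatible with the local Langlands correspondence at $v.$ Indeed, $\pi$ itself is IU-type, so the inductive application of Lemma \ref{lem-lem1} described immediately after its proof produces such a $\Pi_k.$ Writing $\Pi_k = \tau_{k,1} \boxplus \cdots \boxplus \tau_{k,r_k}$ with each $\tau_{k,j}$ unitary cuspidal, compatibility at $v$ together with the fact that $\boxtimes$ corresponds to tensor product of Langlands parameters implies that the multiset of Satake parameters of $\Pi_k$ at $v$ is exactly
\[
\{\alpha_{i_1}\cdots \alpha_{i_k} : (i_1,\dots,i_k) \in \{1,\dots,n\}^k\},
\]
and this multiset is the disjoint union of the Satake parameter multisets of the $\tau_{k,j}$ at $v.$

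Next I would invoke the unconditional Jacquet-Shalika bound: for any unitary cuspidal automorphic representation $\tau$ of any $\GL_m(\A_F)$ and any non-Archimedean place $w$ at which $\tau$ is unramified, every Satake parameter $\beta$ of $\tau_w$ satisfies $q_w^{-1/2} < |\beta| < q_w^{1/2}.$ Applied to each $\tau_{k,j}$ at $v,$ this yields $|\alpha_{i_1}\cdots \alpha_{i_k}| < q_v^{1/2}$ for every multi-index $(i_1,\dots,i_k).$ Specializing to $i_1=\cdots=i_k=i$ gives $|\alpha_i|^k < q_v^{1/2}$ for every $k \geq 1,$ so letting $k \to \infty$ forces $|\alpha_i| \leq 1$ for all $i.$

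To conclude, I would apply the same reasoning to $\pi^\vee,$ which is unitary cuspidal, unramified at $v,$ with Satake parameters $\alpha_i^{-1}$ at $v.$ The argument above then gives $|\alpha_i^{-1}| \leq 1,$ hence $|\alpha_i| \geq 1,$ and combining the two bounds yields $|\alpha_i|=1,$ as required. I do not foresee a genuine obstacle: the full strength of the hypothesis (existence, IU-type, and unramified compatibility of weak Rankin-Selberg transfers) is used precisely to produce the $\tau_{k,j}$ with explicitly known Satake parameters at $v,$ after which the Jacquet-Shalika bound closes the argument. The only point warranting a brief verification is the identification of the Satake parameters of $\Pi_k$ at $v,$ which is immediate from the compatibility clause together with the definition of $\boxtimes$ via the tensor product of local Langlands parameters.
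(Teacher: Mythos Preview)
Your proposal is correct and follows essentially the same argument as the paper: iterate Rankin-Selberg transfers via Lemma~\ref{lem-lem1} to obtain an IU-type $\Pi_k\cong(\pi^{\boxtimes k})^S$, read off the Satake parameters at $v$ as $k$th powers, and apply the Jacquet--Shalika bound to force $|\alpha_i|=1$. The only cosmetic difference is that the paper uses both the upper and lower Jacquet--Shalika bounds $q^{-1/2}\le |\alpha|^k\le q^{1/2}$ simultaneously, whereas you use only the upper bound and then pass to $\pi^\vee$ to obtain the other inequality; these are of course equivalent.
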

\begin{proof}
Let $S$ be a finite set of places of $F$ including the infinite places.  
Let $\pi$ be a unitary cuspidal automorphic representation of $\GL_n(\A_F)$ that is unramified outside $S.$  

As explained before the statement of the theorem, the assumption of the theorem and Lemma \ref{lem-lem1} imply that there exists an IU-type automorphic representation $\Pi$ such that
$$
(\pi^{\boxtimes k})^S\cong \Pi^S.
$$

Choose a place $v\notin S$ and let $q$ be the order of the residue field of the ring of integers $\OO_{F_v}$ of $F_v.$
For unramified representations $\sigma_v$ of $\GL_m(F_v)$, let $c(\sigma_v) \in \GL_m(\CC)$ be the Langlands class.  
 By \cite[\S 2.5]{JacquetShalikaEPI}, the eigenvalues of $c(\pi_v^{\boxtimes k})$ have complex norm between $q^{-1/2}$ and $q^{1/2}.$  On the other hand, if $\alpha$ is an eigenvalue of $c(\pi_v),$
 then $\alpha^k$ is an eigenvalue of $c(\pi_v^{\boxtimes k}).$
Therefore one has that
$$
q^{-1/2}\leq |\alpha|^k\leq q^{1/2}.
$$
Since $k$ was arbitrary we deduce that $|\alpha|=1.$ In other words $\pi_v$ is tempered.
\end{proof}

\begin{rem} 
We now explain how the hypotheses of Theorem \ref{thm:RS} are implied by the combination of Langlands functoriality and the Ramanujan conjecture.  Assume $\pi_1$ and $\pi_2$ are cuspidal automorphic representations of $\GL_{n_1}(\A_F)$ and $\GL_{n_2}(\A_F)$ that are unitary and satisfy the Ramanujan conjecture.  If the admissible representation  $\pi_1 \boxtimes \pi_2$ is automophic, then $\pi_1 \boxtimes \pi_2 \cong \boxplus_{i=1}^r\sigma_i$ for some unitary cuspidal automorphic representations $\sigma_i$ that also satisfy the Ramanujan conjecture.    Let $\pi_3$ be a unitary cuspidal automorphic representation of $\GL_{n_3}(\A_F).$  By known properties of Rankin-Selberg $L$-functions \cite[\S 4.2]{CogdellPCMI}, the $L$-function $L(s,\pi_1 \times \pi_2 \times \pi_3)$ then has a pole if and only if $\pi_3 \cong \sigma_j^\vee|\det|^{it}$ for some $j$ and $t \in \RR$ and has no poles for $\mathrm{Re}(s)>1.$  
This implies \ref{assum:pole}.  Since the representations $\sigma_i$ are unramified at any place where $\pi_1$ and $\pi_2$ are both unramified, it also implies \ref{assum:nice}.
\end{rem}

\section{On Rankin-Selberg Transfers}\label{sec:RS:comments}

In this section we comment on the relationship between general Langlands transfers and Rankin-Selberg transfers.  
For simplicity, we work with a split group $G$ over the number field $F.$  As usual, let $\widehat{G}$ be the reductive algebraic group over $\CC$ with root datum dual to that of $G.$ 

We consider the problem of constructing Langlands functorial transfers attached to representations
$$
r:{}^LG \lto \GL_V(\CC).
$$
We assume that the restriction to the Galois component is trivial and thereby identify $r$ with an algebraic representation 
$$
r:\widehat{G} \lto \GL_V
$$
of reductive groups over $\CC.$  Choose a maximal torus and Borel subgroup $\widehat{T} \leq \widehat{B}$ of $ \widehat{G}^{\mathrm{der}}.$  Thus we may speak of dominant and highest weights.

For simplicity, we assume that
\begin{enumerate}
\item there are representations $\rho_1,\dots \rho_n$ of $\widehat{G}$ such that the induced representations of $\mathrm{Lie}\,\widehat{G}^{\mathrm{der}}$ are the fundamental representations.  \label{assum1} 
\end{enumerate}
Let $\lambda_i$ be the highest weight of $\rho_i.$

If $\widehat{G}$ is semisimple and simply connected, then \eqref{assum1} is valid \cite[\S 22.8, \S 22.11]{Milne:AGbook}. However, it is not necessary that $\widehat{G}$ be semisimple.  For example, if  $\widehat{G}=\GL_{n+1}$ we may take $\rho_i=\bigwedge^i(V_{\mathrm{st}}),$ where $V_{\mathrm{st}} \cong \GG_a^{n+1}$ is the standard representation of $\GL_{n+1},$ \cite[\S 22.32]{Milne:AGbook}.   

\quash{
\textcolor{cyan}{Let $(G,B,T)$ be a split reductive group, and let $\Phi'$ be the set of positive coroots. By definition $X^*(T)^+:=(\Phi')^\vee\cap X^*(T)=\{\chi\in X^*(T)|\langle\chi,\alpha^\vee\rangle\geq 0,\forall\alpha^\vee\in\Phi'\}$. In particular $X^*(T)^+$ is a rational convex polyhedral cone. By \href{https://en.wikipedia.org/wiki/Gordan\%27s_lemma}{Gordan's lemma} this is automatically a finitely generated monoid.}}

\quash{
\begin{lem}
    Assumption \eqref{span} is true if $\widehat{G}$ is semisimple and simply connected.
\end{lem}
\begin{proof}
One can take the $\lambda_i$ to be the fundamental weights of the Lie algebra.  
Since $\widehat{G}$ is simply connected, each of these are weights of representations $\rho_i:\widehat{G} \to \GL_{V_{\rho_i}}$ . 
\end{proof}}

\begin{defn} \label{defn:virt} Let $S$ be a finite set of places of $F.$ 
    We say that an admissible irreducible representation $\pi^S$ of $\GL_n(\A_F^S)$ is \textbf{stably automorphic} if there exist isobaric automorphic representations $\Pi$ of $\GL_{n+n_0}(\A_F)$ and $\pi'$ of $\GL_{n_0}(\A_F)$ for some $n_0 \geq 0$ such that 
    $$
    \Pi^S \cong \pi^S \boxplus \pi'^S.
    $$
\end{defn}

The notion of a stably automorphic representation appears implicitly in \cite[(4.5)]{JacquetShalikaEPII}. Using the notation of Definition \ref{defn:virt}, if $\pi^S$ is stably automorphic we write
$$
\pi^S=\Pi^S \boxminus \pi'^S.
$$
In many cases we expect that if $\pi^S$ is a stably automorphic representation of $\GL_n(\A_F^S)$ then there is an automorphic representation $\pi_0$ of $\GL_n(\A_F)$ such that $\pi^S_0 \cong \pi^S.$
 However establishing this in any given case appears to be a difficult problem.  We refer to \cite{LYang} for a recent example. 

\begin{lem} \label{lem:diff}
Let $n_1,n_2 \in \ZZ_{\geq 1}$ and let $S$ be a finite set of places of $F.$ If $\Pi$ is an automorphic representation of $\GL_{n_1+n_2}(\A_F)$, $\pi'^S$ is a stably automorphic representation of $\GL_{n_2}(\A_F^S)$ and $\pi^S$ is an irreducible admissible representation of $\GL_{n_1}(\A_F^S)$ such that $\Pi^S \cong \pi'^S\boxplus \pi^S,$ then $\pi^S$ is stably automorphic.
\end{lem}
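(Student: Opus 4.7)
The plan is to prove this by a direct cancellation argument: add enough ``padding'' to $\Pi$ to absorb $\pi'^S$, leaving $\pi^S$ as the complementary piece of an isobaric automorphic representation.

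First I would unpack the hypothesis that $\pi'^S$ is stably automorphic: by Definition \ref{defn:virt} there exist $m_0 \geq 0$ and isobaric automorphic representations $\Pi'$ of $\GL_{n_2 + m_0}(\A_F)$ and $\tau'$ of $\GL_{m_0}(\A_F)$ such that
\[
\Pi'^S \cong \pi'^S \boxplus \tau'^S.
\]
Next I would form $\Pi_0 := \Pi \boxplus \tau'$. This is an automorphic representation of $\GL_{n_1 + n_2 + m_0}(\A_F)$, and as noted in the discussion following Theorem \ref{Thm:3CPS} we may replace it by an isobaric automorphic representation without altering its unramified data, so we take $\Pi_0$ itself to be isobaric. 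Setting $n_0 := n_2 + m_0$ and $\pi'' := \Pi'$, the desired identity
\[
\Pi_0^S \cong \pi^S \boxplus \pi''^S
\]
would then be checked by the chain
\[
\Pi_0^S \cong \Pi^S \boxplus \tau'^S \cong (\pi'^S \boxplus \pi^S) \boxplus \tau'^S \cong \pi^S \boxplus (\pi'^S \boxplus \tau'^S) \cong \pi^S \boxplus \Pi'^S = \pi^S \boxplus \pi''^S,
\]
using the hypothesis $\Pi^S \cong \pi'^S \boxplus \pi^S$ in the second step and the stably automorphic description of $\pi'^S$ in the fourth. This exhibits $\pi^S$ as stably automorphic.

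There is really no serious obstacle: the entire argument is formal once one has the associativity and commutativity of $\boxplus$ on admissible representations, which holds because $\boxplus$ corresponds to direct sum of local Langlands parameters. The only point requiring any care is the implicit replacement of the automorphic representation $\Pi \boxplus \tau'$ by an isobaric representative, and for that I would simply cite the same discussion in \cite[\S 10.6--10.7]{GetzHahn:Book} that the authors already invoke after Theorem \ref{Thm:3CPS}.
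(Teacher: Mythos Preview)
Your proof is correct and is essentially identical to the paper's: both add the ``padding'' piece from the stably automorphic decomposition of $\pi'^S$ to $\Pi$ and then read off $\pi^S$ as the complement of the larger isobaric piece $\Pi'$. The only difference is cosmetic---your $(\tau',m_0)$ is the paper's $(\pi'',n_0)$---and you are slightly more explicit than the paper about replacing $\Pi\boxplus\tau'$ by an isobaric representative.
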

\begin{proof} Since $\pi'^S$ is stably automorphic,  $\Pi'^S \cong \pi''^S\boxplus \pi'^S$ for some isobaric automorphic representations $\Pi'$ of $\GL_{n_2+n_0}(\A_F)$ (resp. $\pi''$ of $\GL_{n_0}(\A_F)$) and $n_0\geq 0.$ Then
\begin{align*}
    \Pi'^S\boxplus \pi^S \cong \pi''^S\boxplus \pi'^S \boxplus \pi^S \cong \pi''^S\boxplus \Pi^S = (\pi''\boxplus \Pi)^S.
\end{align*}
Since $\pi''\boxplus \Pi$ and $\Pi'$ are isobaric automorphic representations, this implies $\pi^S$ is stably automorphic.
\end{proof}

 If $r:\widehat{G} \to \GL_n$ is a representation, $S$ is a finite set of places of $F$ including the infinite places, and $\pi^S$ is an unramified representation of $G(\A_F^S)$ we denote by $
r(\pi^S):=\otimes_{v \not \in S}r(\pi_v)
$ (see \cite[\S 13.3]{GetzHahn:Book} for this standard notation).  It is an irreducible admissible representation of $\GL_n(\A_F^S).$

\begin{thm} \label{thm:red}
     Let $S$ be a finite set of places of $F$ including the infinite places. Suppose that \eqref{assum1} holds and
    \begin{enumerate}[label=(\alph*)]
        \item \label{weak:RS} weak Rankin-Selberg transfers exist and are compatible at all finite unramified places,
        \item \label{fund:reps} weak transfers of automorphic representations of $G(\A_F)$ to $\GL_{V_{\rho_i}}(\A_F)$ with respect to the representations $\rho_i$ exist and are compatible at all finite unramified places. 
    \end{enumerate}
    Then for any irreducible representation $r:\widehat{G} \to \GL_V$ and any automorphic representation $\pi$ of $G(\A_F)$ unramified outside $S,$ the admissible representation $r(\pi^S)$ is stably automorphic.
\end{thm}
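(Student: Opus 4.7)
The plan is to induct on the highest weight $\lambda$ of $r$ with respect to the dominance partial order on dominant weights of $\widehat{G},$ which is well-founded since the set of dominant $\mu \leq \lambda$ is finite for each $\lambda.$ The base case $\lambda=0$ yields the trivial Hecke character, which is stably automorphic.

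For the inductive step, using \eqref{assum1} write $\lambda|_{\widehat{T}^{\mathrm{der}}} = \sum_i m_i\,\lambda_i|_{\widehat{T}^{\mathrm{der}}}$ with unique $m_i \in \ZZ_{\geq 0};$ then $\chi := \lambda - \sum_i m_i\,\lambda_i$ descends to a character of $\widehat{G}/\widehat{G}^{\mathrm{der}},$ i.e., a $1$-dimensional representation of $\widehat{G}.$ By the Cartan-product construction and complete reducibility,
\begin{align*}
\bigotimes_{i=1}^{n} \rho_i^{\otimes m_i}\otimes \chi \;\cong\; r \;\oplus\; \bigoplus_{\mu < \lambda} N_\mu\, V_\mu
\end{align*}
as $\widehat{G}$-modules, with $V_\mu$ irreducible of dominant highest weight $\mu < \lambda,$ and $r$ appearing as the Cartan component with multiplicity one.

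By \ref{fund:reps} there are isobaric automorphic representations $\Pi_i$ with $\Pi_i^S \cong \rho_i(\pi^S);$ let $\eta_\chi$ denote the automorphic Hecke character of $G(\A_F)$ corresponding under Langlands duality to the $1$-dimensional representation $\chi$ of $\widehat{G}.$ Iterating \ref{weak:RS} in the style of Lemma \ref{lem-lem1} (extending from cuspidal to isobaric inputs by distributing $\boxtimes$ over $\boxplus$) produces an isobaric automorphic representation $\widetilde\Pi$ with
\begin{align*}
\widetilde\Pi^S \;\cong\; \eta_\chi^S\cdot\bigl(\Pi_1^{\boxtimes m_1} \boxtimes \cdots \boxtimes \Pi_n^{\boxtimes m_n}\bigr)^S \;\cong\; r(\pi^S)\;\boxplus\; \Sigma^S,
\end{align*}
where $\Sigma^S$ is the isobaric sum of $V_\mu(\pi^S),$ taken with multiplicity $N_\mu,$ over dominant $\mu<\lambda;$ the second isomorphism is a Satake-parameter comparison at each $v\notin S$ using the $\widehat{G}$-decomposition above. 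By the inductive hypothesis each $V_\mu(\pi^S)$ is stably automorphic; the class of stably automorphic representations is closed under isobaric sum (immediate from Definition \ref{defn:virt} by taking the isobaric sum of the witnessing isobaric automorphic representations), so $\Sigma^S$ is stably automorphic; Lemma \ref{lem:diff} then yields that $r(\pi^S)$ itself is stably automorphic.

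The principal obstacle is the careful handling of the $1$-dimensional twist $\chi.$ One must verify that $\chi$ corresponds under Langlands duality to a genuine automorphic Hecke character $\eta_\chi$ on $G(\A_F)$ reflecting the correct unramified data, and that twisting by $\eta_\chi$ commutes with the iterated Rankin-Selberg construction in a manner compatible with the Satake isomorphism. A secondary technicality is the extension of Lemma \ref{lem-lem1} from cuspidal to isobaric inputs, which follows by the same ``distribute $\boxtimes$ over $\boxplus$'' argument used there together with \ref{weak:RS}.
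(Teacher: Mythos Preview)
Your proof is correct and follows essentially the same strategy as the paper's: well-founded induction on highest weights via the dominance order, decomposing the tensor product of fundamental representations into the Cartan component $r$ plus strictly lower-weight pieces, then invoking Lemma~\ref{lem:diff}. The paper differs mainly in one organizational choice: rather than carrying the character twist $\chi$ explicitly through the inductive step, it begins by passing to equivalence classes of irreducible representations of $\widehat{G}$ modulo characters of $\widehat{G}/\widehat{G}^{\mathrm{der}}$, thereby reducing to dominant weights of $\widehat{G}^{\mathrm{der}}$. This sidesteps the verification you flag as the ``principal obstacle,'' though the paper's reduction implicitly relies on the same underlying fact (that one-dimensional transfers are automorphic, so twisting by $\chi(\pi^S)$ preserves stable automorphy via Rankin--Selberg with $n_2=1$). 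One minor point: the minimal elements of the dominance order are not only $\lambda=0$ but also the minuscule weights (the paper cites Stembridge for this and handles them via \ref{fund:reps}, since minuscule representations are fundamental). Your stated base case is thus incomplete; however, your inductive step already covers these, since for minuscule $\lambda$ the tensor product collapses to a single $\rho_j\otimes\chi$ with no lower summands, so the argument goes through unchanged.
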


\begin{proof} 
Call two irreducible representations $r:\widehat{G} \to \GL_V$ equivalent if their restrictions to $\widehat{G}^{\mathrm{der}}$ are isomorphic.  If $r_1$ and $r_2$ are equivalent, then there is a character $\chi:\widehat{G}/\widehat{G}^{\mathrm{der}} \to \GG_m$ such that $r_1 =r_2 \otimes \chi.$  Thus the theorem follows for one representation in the equivalence class if and only if it follows for all representations in the equivalence class.  
The equivalence classes of irreducible representations are in natural bijection with dominant weights with respect to the torus $\widehat{T}$ and Borel subgroup $\widehat{B}$ chosen above \eqref{assum1}.

We will use well-founded induction on the highest weights, so let us recall some basic facts.
The set of dominant weights admits the usual associated partial order: $\nu \geq \nu'$ if $\nu-\nu'$ is a nonnegative linear combination of the simple roots.  
  This is a well-founded partial order, that is, every chain admits a minimal element \cite[Corollary 1.4]{Stembridge}.
Moreover, the minimal dominant weights are precisely the minuscule weights and zero \cite[Proposition 1.12]{Stembridge}.  
Every dominant weight 
is of the form
$$
\sum_{i=1}^n m_i\lambda_i
$$
for some $m_i \in \ZZ_{\geq 0}$ \cite[\S 13]{Humphreys}.

Since every minuscule representation is a fundamental representation \cite[Exercise 25.24]{Fulton:Harris} and the equivalence classes of representations of highest  weight $0$ are characters, the basis step of the induction follows from assumption \ref{fund:reps}.

Now consider an irreducible representation $r$ of $\widehat{G}$ whose restriction to $\widehat{G}^{\mathrm{der}}$ has highest weight
$$
\sum_{i=1}^n m_i\lambda_i.
$$
Then $r$ is the unique $\widehat{G}^{\mathrm{der}}$-subrepresentation of 
$$
\rho(m_1,\dots,m_n):=\rho_1^{\otimes m_1} \otimes\dots \otimes \rho_n^{\otimes m_n}
$$
of highest weight $\lambda,$ and every other subrepresentation of $\rho(m_1,\dots,m_n)$ is of strictly lower weight.  If this is not clear to the reader, we point out that it follows from \cite[\S 10.5, Proposition 3]{Procesi}.

Thus, by the first assumption \ref{weak:RS} above and our inductive assumption, there exists an automorphic representation $\Pi$ of $\GL_{V_{\rho(m_1,\dots,m_n)}}(\A_F)$ and a stably automorphic representation $\pi'^S$ such that
$$
\Pi^S \cong r(\pi^S) \boxplus \pi'^S.
$$
This implies the theorem by Lemma \ref{lem:diff}.
\end{proof}
\quash{
\textcolor{red}{HaoYun: Seem to me still don't need assumption (1). According to the argument above, I think we only need to choose a set $\{\lambda_i'\}_{i\in[n]}$ of generators of the cone $X^*(\widehat{T}\cap \widehat{G}^{der})^+$ as a monoid such that it contains all minimal dominant weights. The last requirement allows us to do induction on the poset. The base cases correspond exactly to the minimal dominant weights. In general, choose any presentation $\lambda_r:=\sum_{i=1}^n m_i\lambda_i'$ of the height weight of $r\vert_{\widehat{G}^{der}}$ (I'm not sure if this again irreducible. Seems true.). The inclusion $\widehat{G}^{der}\subseteq \widehat{G}$ gives a surjection $X^*(\widehat{T}\cap \widehat{G})\to X^*(\widehat{T}\cap \widehat{G}^{der})$ (basically just modding out $\{\lambda\mid \langle\lambda,(\Delta^+)^\vee\rangle=0\}$). In particular, it restricts to an surjection $X^*(\widehat{T}\cap \widehat{G})^+\to X^*(\widehat{T}\cap \widehat{G}^{der})^+$. If $\lambda\mapsto \lambda'$ under this map, then I think??? (this I'm not sure at all) $V(\lambda)\vert_{\widehat{G}^{der}}\cong V(\lambda')$, where $V(\lambda)$ (resp. $V(\lambda')$) denotes the highest weight modules. For each $i\in[n]$ pick a fibre $\lambda_i$ of $\lambda_i'$. Then}
\begin{align*}
    \bigotimes_{i\in [n]}V(\lambda_i)^{\otimes m_i} = r \oplus \bigoplus_{\lambda<\lambda_r} V(\lambda)^{\oplus k_i}
\end{align*}
\textcolor{red}{for some multiplicity $k_i\in\mathbb{Z}_{\geq 0}$. Since $\bigotimes_{i\in [n]}V(\lambda_i)^{\otimes m_i}\vert_{\widehat{G}^{der}} \cong \bigotimes_{i\in [n]}V(\lambda_i')^{\otimes m_i}$, this we know by our assumption. Also, $V(\lambda)\vert_{\widehat{G}^{der}}\cong V(\lambda')$, where $\lambda'$ is the projection of $\lambda$ to $X^*(\widehat{T}\cap \widehat{G}^{der})^+$. We then must show $\lambda<\lambda_r$ implies $\lambda'<\lambda_r$ (here i'm a little bit ghoulish by viewing $\lambda_r$ as the dominant weight of $r$ viewed as a representation of $\widehat{G}$, but they only differ by a center). But $\widehat{G}^{der}$ and $\widehat{G}$ have the same positive roots if we choose the same Borel for them. By induction on the poset, this we know perfectly. Now}
\begin{align*}
    \boxtimes_{i\in [n]}V(\lambda_i)(\pi^S)^{\boxtimes m_i} = r(\pi^S) \boxplus\boxplus_{\lambda<\lambda_r} V(\lambda)(\pi^S)^{\boxplus k_i}
\end{align*}
\textcolor{red}{We can then conclude by Lemma \ref{lem:diff}.} \textcolor{purple}{But I should check the detail. I think this is more or less the argument we had before, but the only issue we were not aware of is to include all minimal dominant weights. It $\{\lambda\in X^*(\widehat{T})\mid \langle\lambda,\alpha^\vee\rangle=0\,\forall\alpha^\vee\}$ corresponds to characters, then we don't need to pass to its derived group I think. But I don't know if this is true.}
}

\bibliography{refs}{}

\def\polhk#1{\setbox0=\hbox{#1}{\ooalign{\hidewidth
  \lower1.5ex\hbox{`}\hidewidth\crcr\unhbox0}}}
\begin{thebibliography}{GGHL25}

\bibitem[BK00]{BK-lifting}
A.~Braverman and D.~Kazhdan.
\newblock {$\gamma$}-functions of representations and lifting.
\newblock {\em Geom. Funct. Anal.}, (Special Volume, Part I):237--278, 2000.
\newblock With an appendix by V. Vologodsky, GAFA 2000 (Tel Aviv, 1999).

\bibitem[Cog07]{CogdellPCMI}
J.~W. Cogdell.
\newblock {$L$}-functions and converse theorems for {${\rm GL}\sb n$}.
\newblock In {\em Automorphic forms and applications}, volume~12 of {\em
  IAS/Park City Math. Ser.}, pages 97--177. Amer. Math. Soc., Providence, RI,
  2007.

\bibitem[CPS94]{Cogdell:PS:I}
J.~W. Cogdell and I.~I. Piatetski-Shapiro.
\newblock Converse theorems for {${\rm GL}_n$}.
\newblock {\em Inst. Hautes \'{E}tudes Sci. Publ. Math.}, (79):157--214, 1994.

\bibitem[CPS99]{Cogdell:PS:ConverseII}
J.~W. Cogdell and I.~I. Piatetski-Shapiro.
\newblock Converse theorems for {${\rm GL}_n$}. {II}.
\newblock {\em J. Reine Angew. Math.}, 507:165--188, 1999.

\bibitem[FH91]{Fulton:Harris}
W.~Fulton and J.~Harris.
\newblock {\em Representation theory}, volume 129 of {\em Graduate Texts in
  Mathematics}.
\newblock Springer-Verlag, New York, 1991.
\newblock A first course, Readings in Mathematics.

\bibitem[GGHL25]{Getz:Gu:Hsu:Leslie}
J.~R. {Getz}, M.~P. {Gu}, C-H. {Hsu}, and S.~{Leslie}.
\newblock {On triple product $L$-functions and the fiber bundle method}.
\newblock {\em arXiv e-prints}, page arXiv:2503.21648, March 2025.

\bibitem[GH24]{GetzHahn:Book}
J.~R. Getz and H.~Hahn.
\newblock {\em An introduction to automorphic representations---with a view
  toward trace formulae}, volume 300 of {\em Graduate Texts in Mathematics}.
\newblock Springer, Cham, [2024] \copyright 2024.

\bibitem[GJ72]{GodementJacquetBook}
R.~Godement and H.~Jacquet.
\newblock {\em Zeta functions of simple algebras}.
\newblock Lecture Notes in Mathematics, Vol. 260. Springer-Verlag, Berlin-New
  York, 1972.

\bibitem[Hen00]{Henniart-invent}
G.~Henniart.
\newblock Une preuve simple des conjectures de {L}anglands pour {${\rm GL}(n)$}
  sur un corps {$p$}-adique.
\newblock {\em Invent. Math.}, 139(2):439--455, 2000.

\bibitem[HT01]{HarrisTaylor:Shimura}
M.~Harris and R.~Taylor.
\newblock {\em The geometry and cohomology of some simple {S}himura varieties},
  volume 151 of {\em Annals of Mathematics Studies}.
\newblock Princeton University Press, Princeton, NJ, 2001.
\newblock With an appendix by Vladimir G. Berkovich.

\bibitem[Hum78]{Humphreys}
J.~E. Humphreys.
\newblock {\em Introduction to {L}ie algebras and representation theory},
  volume~9 of {\em Graduate Texts in Mathematics}.
\newblock Springer-Verlag, New York-Berlin, 1978.
\newblock Second printing, revised.

\bibitem[JS81a]{JacquetShalikaEPII}
H.~Jacquet and J.~A. Shalika.
\newblock On {E}uler products and the classification of automorphic forms.
  {II}.
\newblock {\em Amer. J. Math.}, 103(4):777--815, 1981.

\bibitem[JS81b]{JacquetShalikaEPI}
H.~Jacquet and J.~A. Shalika.
\newblock On {E}uler products and the classification of automorphic
  representations. {I}.
\newblock {\em Amer. J. Math.}, 103(3):499--558, 1981.

\bibitem[Laf14]{LafforgueJJM}
L.~Lafforgue.
\newblock Noyaux du transfert automorphe de {L}anglands et formules de
  {P}oisson non lin\'eaires.
\newblock {\em Jpn. J. Math.}, 9(1):1--68, 2014.

\bibitem[Lan70]{Langlands:Problems}
R.~P. Langlands.
\newblock Problems in the theory of automorphic forms.
\newblock pages 18--61. Lecture Notes in Math., Vol. 170, 1970.

\bibitem[Lan89]{LanglandsReal}
R.~P. Langlands.
\newblock On the classification of irreducible representations of real
  algebraic groups.
\newblock In {\em Representation theory and harmonic analysis on semisimple
  {L}ie groups}, volume~31 of {\em Math. Surveys Monogr.}, pages 101--170.
  Amer. Math. Soc., Providence, RI, 1989.

\bibitem[Mil17]{Milne:AGbook}
J.~S. Milne.
\newblock {\em Algebraic groups}, volume 170 of {\em Cambridge Studies in
  Advanced Mathematics}.
\newblock Cambridge University Press, Cambridge, 2017.
\newblock The theory of group schemes of finite type over a field.

\bibitem[Ng{\^o}14]{NgoSums}
B.~C. Ng{\^o}.
\newblock On a certain sum of automorphic {$L$}-functions.
\newblock In {\em Automorphic forms and related geometry: assessing the legacy
  of {I}. {I}. {P}iatetski-{S}hapiro}, volume 614 of {\em Contemp. Math.},
  pages 337--343. Amer. Math. Soc., Providence, RI, 2014.

\bibitem[Ng{\^{o}}20]{Ngo:Hankel}
B.~C. Ng{\^{o}}.
\newblock Hankel transform, {L}anglands functoriality and functional equation
  of automorphic {$L$}-functions.
\newblock {\em Jpn. J. Math.}, 15(1):121--167, 2020.

\bibitem[Pro07]{Procesi}
C.~Procesi.
\newblock {\em Lie groups}.
\newblock Universitext. Springer, New York, 2007.
\newblock An approach through invariants and representations.

\bibitem[Sch13]{Scholze:p-adic}
P.~Scholze.
\newblock The local {L}anglands correspondence for {$\rm{GL}_n$} over
  {$p$}-adic fields.
\newblock {\em Invent. Math.}, 192(3):663--715, 2013.

\bibitem[Ste98]{Stembridge}
J.~R. Stembridge.
\newblock The partial order of dominant weights.
\newblock {\em Adv. Math.}, 136(2):340--364, 1998.

\bibitem[Yan21]{LYang}
L.~Yang.
\newblock Holomorphy of adjoint {$L$}-functions for {${\rm GL}(n)\:n\leq4$}.
\newblock {\em Math. Ann.}, 381(3-4):1745--1805, 2021.

\end{thebibliography}
\bibliographystyle{alpha}

\end{document}